\theoremstyle{plain}
\newtheorem{theorem}{Theorem}
\newtheorem{lemma}{Lemma}
\newtheorem{proposition}{Proposition}
\theoremstyle{definition}
\theoremstyle{remark}
\newtheorem{remark}{Remark}
\numberwithin{equation}{section}
\begin{document}
\title[The stable splitting of $\mathit{bu}\wedge{BSO(2n)}$]
      {The stable splitting of $\mathit{bu}\wedge{BSO(2n)}$}
\author{I-Ming Tsai}
\address{DEPARTMENT OF MATHEMATICS\\
              NATIONAL TSING HUA UNIVERSITY\\
              HSINCHU,  TAIWAN}
\email{imingtsai@mx.nthu.edu.tw}
\keywords{stable splitting, complex connective K-theory,
                 Stifel-Whitney classes.}
\subjclass[2010]{55N20,55P42}
\date{November 7, 2019}
\begin{abstract}
We give the stable splitting of the spectra $\mathit{bu}\wedge{BSO(2n)}$, which completes the question of finding the stable splitting of the complex connective $K$-theory of classifying spaces of special orthogonal groups.
\end{abstract}

\maketitle

\section{Motivation}\label{s:motivation}
The stable splitting of the spectra $\mathit{bu}\wedge{X}$ started with E. Ossa's computation of $\mathit{bu}\wedge\Sigma^{-2}B\mathbb{Z}/2\wedge{B\mathbb{Z}/2}$(see \cite{Os89}), followed by Bruner and Greenlees giving more results on $\mathit{bu}\wedge{BG}$ in \cite{BuGr91}, $BG$ the classifying space of some finite group $G$. For infinite Lie groups, Tsung Hsuan Wu proved that $\mathit{bu}\wedge{BSO(2n+1)}$ also has a stable splitting \cite{Wu18}. In this paper I show that the stable splitting of $\mathit{bu}\wedge{BSO(2n)}$ is in fact equivalent to a wedge sum of spectra related to the splitting of $\mathit{bu}\wedge{BSO(2n-1)}$.

\section{Introduction}\label{s:introduction}
In this paper our cohomology has $\mathbb{Z}/2$ coefficients, so $\tilde{H}^\ast(X)$ would always stand for $\tilde{H}^\ast(X,\mathbb{Z}/2)$. $A$ will denote the mod 2 Steenrod Algebra, $\mathit{bu}$ is the spectrum of the complex connective $K$-theory, with $H^\ast(\mathit{bu})\cong{A//A(Q_{0},Q_{1})}\cong{A\otimes_{E}{\mathbb{Z}/2}}$, $E=\mathbb{Z}/2\langle{Q_{0},Q_{1}}\rangle$ the exterior algebra over the Milnor primitives $Q_{0}$ and $Q_{1}$. $H\mathbb{Z}/2$ is the $\mathbb{Z}/2$ Eilenberg-MacLane spectrum. If $Y$ is a spectrum, then $\Sigma^{m}Y$ is the suspended spectrum increased by $m$ degrees. The tensor product $\otimes$ will stand for $\otimes_{\mathit{Z}/2}$,  so all the spectra and its homotopy equivalences in this paper are 2-localised. It is a standard fact that $H^\ast(BSO(n))= \mathbb{Z}/2[\widehat{\omega_2},\widehat{\omega_3},\cdots,\widehat{\omega_{n}}]$, $\widehat{\omega_i}\in{H^{i}(BSO(n))}$ is the $i$-th Stiefel-Whitney class. We first give our result on the splitting:

\begin{theorem}\label{T:ppppp}
For every $n\geq2$, we have a stable splitting 
\[
  \mathit{bu}\wedge{BSO(2n)}\simeq[{\bigvee_{\alpha}}\Sigma^{\alpha}H\mathbb{Z}/2]\vee[{\bigvee_{\beta}}\Sigma^{\beta}\mathit{bu}]\vee[\mathit{bu}\wedge{MSO_{2n}}], 
\]
where $MSO_{2n}$ is the Thom space constructed from $BSO(2n)$. $\alpha$ are the degress of the free generators of the free $E$-submodule $M_{2n-1}$ of $\tilde{H}^\ast(BSO(2n-1))$, while $\beta$ are the degrees of trivial generators of the $E$-submodule $D_{2n-1}$, with $\tilde{H}^\ast(BSO(2n-1))\cong{M_{2n-1}}\oplus{D_{2n-1}}$ as $E$-modules.
\end{theorem}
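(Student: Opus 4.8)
The plan is to reduce the even case to Wu's theorem for the odd case by means of the universal sphere bundle. Since $SO(2n)/SO(2n-1)=S^{2n-1}$, the sphere bundle $S(\gamma_{2n})$ of the universal oriented $2n$-plane bundle $\gamma_{2n}\to BSO(2n)$ is a model for $BSO(2n-1)$, with disk bundle $\simeq BSO(2n)$ and Thom space $MSO_{2n}$. The Thom cofibre sequence $S(\gamma_{2n})_{+}\to D(\gamma_{2n})_{+}\to\mathrm{Th}(\gamma_{2n})$, after deleting the common split copy of $\mathbb{S}$, becomes a cofibre sequence of spectra
\[
  \Sigma^{\infty}BSO(2n-1)\xrightarrow{\ p\ }\Sigma^{\infty}BSO(2n)\xrightarrow{\ q\ }\Sigma^{\infty}MSO_{2n},
\]
and smashing with $\mathit{bu}$ gives a cofibre sequence of $\mathit{bu}$-module spectra
\[
  \mathit{bu}\wedge BSO(2n-1)\xrightarrow{\ 1\wedge p\ }\mathit{bu}\wedge BSO(2n)\xrightarrow{\ 1\wedge q\ }\mathit{bu}\wedge MSO_{2n}.
\]
Wu's theorem \cite{Wu18} supplies the $E$-module decomposition $\tilde{H}^{\ast}(BSO(2n-1))\cong M_{2n-1}\oplus D_{2n-1}$ with $M_{2n-1}$ free and $D_{2n-1}$ trivial, together with the equivalence $\mathit{bu}\wedge BSO(2n-1)\simeq\bigl(\bigvee_{\alpha}\Sigma^{\alpha}H\mathbb{Z}/2\bigr)\vee\bigl(\bigvee_{\beta}\Sigma^{\beta}\mathit{bu}\bigr)$, the free generators of $M_{2n-1}$ accounting for the $\Sigma^{\alpha}H\mathbb{Z}/2$ summands and the trivial generators of $D_{2n-1}$ for the $\Sigma^{\beta}\mathit{bu}$ summands. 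So it suffices to prove that the second cofibre sequence above splits.

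To split it I would compute over $E=\mathbb{Z}/2\langle Q_{0},Q_{1}\rangle$. The cofibre sequence realises a short exact sequence of $A$-modules, hence of $E$-modules,
\[
  0\to\tilde{H}^{\ast}(MSO_{2n})\xrightarrow{\ q^{\ast}\ }\tilde{H}^{\ast}(BSO(2n))\xrightarrow{\ p^{\ast}\ }\tilde{H}^{\ast}(BSO(2n-1))\to 0,
\]
in which, via the Thom isomorphism, $q^{\ast}$ is multiplication by the mod $2$ Euler class $\widehat{\omega_{2n}}$ and $p^{\ast}$ is the quotient by the ideal $(\widehat{\omega_{2n}})$; a Poincar\'e-series check ($\prod_{i=2}^{2n}(1-t^{i})^{-1}=\prod_{i=2}^{2n-1}(1-t^{i})^{-1}+t^{2n}\prod_{i=2}^{2n}(1-t^{i})^{-1}$) shows the three modules have matching dimensions, so one may hope to split the sequence over $E$. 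Since $M_{2n-1}$ is $E$-free its generators lift automatically, so the content is to lift each trivial generator $d\in\mathbb{Z}/2[\widehat{\omega_{2}},\dots,\widehat{\omega_{2n-1}}]$ of $D_{2n-1}$ to a class $\tilde{d}\in\tilde{H}^{\ast}(BSO(2n))$ annihilated by both $Q_{0}$ and $Q_{1}$. Writing $Q_{0}=Sq^{1}$, $Q_{1}=Sq^{3}+Sq^{2}Sq^{1}$ and using the Wu formulae for $Sq^{i}\widehat{\omega_{j}}$ — which in $H^{\ast}(BSO(2n))$ differ from their counterparts in $H^{\ast}(BSO(2n-1))$ only by terms lying in $(\widehat{\omega_{2n}})$ — one checks that $d$ is already killed by $Q_{0}$, while $Q_{1}d$ (computed in $BSO(2n)$) is a controlled multiple of $\widehat{\omega_{2n}}$ which can be cancelled by correcting $d$ by an element of $(\widehat{\omega_{2n}})$. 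This gives the $E$-module splitting $\tilde{H}^{\ast}(BSO(2n))\cong M_{2n-1}\oplus D_{2n-1}\oplus\tilde{H}^{\ast}(MSO_{2n})$.

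Finally one must realise this splitting geometrically. The summand $M_{2n-1}\oplus D_{2n-1}$ splits off $\tilde{H}^{\ast}(BSO(2n))$ in $\mathrm{Ext}_{E}(-,\mathbb{Z}/2)$, so in the $\mathit{bu}$-based Adams spectral sequence for $\mathit{bu}\wedge BSO(2n)$ — whose $E_{2}$-term is $\mathrm{Ext}_{E}(\tilde{H}^{\ast}(BSO(2n)),\mathbb{Z}/2)$ by change of rings — the corresponding part is a copy of $\mathrm{Ext}_{E}(M_{2n-1}\oplus D_{2n-1},\mathbb{Z}/2)$ consisting of permanent cycles with no additive extensions; by the recognition argument already used in the odd case this exhibits a wedge summand $\bigl(\bigvee_{\alpha}\Sigma^{\alpha}H\mathbb{Z}/2\bigr)\vee\bigl(\bigvee_{\beta}\Sigma^{\beta}\mathit{bu}\bigr)$ of $\mathit{bu}\wedge BSO(2n)$, with complementary summand $W$ satisfying $H^{\ast}(W)\cong\tilde{H}^{\ast}(MSO_{2n})$. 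Now $(1\wedge q)^{\ast}$ is injective with image the copy of $\tilde{H}^{\ast}(MSO_{2n})$ inside $H^{\ast}(\mathit{bu}\wedge BSO(2n))$ that corresponds to $W$, so the restriction of $1\wedge q$ to $W$ is a mod $2$ cohomology isomorphism $W\to\mathit{bu}\wedge MSO_{2n}$; since all spectra in sight are connective, of finite type and $2$-local, it is an equivalence. Combining the two displays yields $\mathit{bu}\wedge BSO(2n)\simeq\bigl(\bigvee_{\alpha}\Sigma^{\alpha}H\mathbb{Z}/2\bigr)\vee\bigl(\bigvee_{\beta}\Sigma^{\beta}\mathit{bu}\bigr)\vee(\mathit{bu}\wedge MSO_{2n})$, which is the theorem. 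I expect the main obstacle to be the $Q_{1}$-lifting step in the middle paragraph — verifying that for every trivial generator the required correction by a multiple of $\widehat{\omega_{2n}}$ can in fact be found — since that is precisely the point at which the even case could fail to parallel the odd one.
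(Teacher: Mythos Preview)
Your overall architecture matches the paper's: use the sphere-bundle cofibre sequence $BSO(2n-1)\to BSO(2n)\to MSO_{2n}$, split the induced short exact sequence of $E$-modules, and realise the pieces. Two points are worth correcting, though.

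First, the obstacle you single out is a phantom. For $i\le 2n-1$ the Wu-formula values of $Q_0\widehat{\omega_i}$ and $Q_1\widehat{\omega_i}$ in $H^{\ast}(BSO(2n))$ contain \emph{no} $\widehat{\omega_{2n}}$-term whatsoever: the only place one could arise is $Q_1\widehat{\omega_{2n-2}}=\widehat{\omega_3}\widehat{\omega_{2n-2}}+\widehat{\omega_{2n+1}}$, and $\widehat{\omega_{2n+1}}=0$ already. Hence $\tilde{H}^{\ast}(BSO(2n-1))\subset\tilde{H}^{\ast}(BSO(2n))$ is literally an $E$-submodule (the paper's Lemma~1), and every trivial generator $d_J\in D_{2n-1}$ lifts to itself with no correction needed; the $E$-splitting $\tilde{H}^{\ast}(BSO(2n))\cong M_{2n-1}\oplus D_{2n-1}\oplus\tilde{H}^{\ast}(MSO_{2n})$ is immediate (Lemma~2).

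Second, your geometric step is where you are vague and the paper is explicit. Rather than invoking an Adams-spectral-sequence ``recognition argument'' and then chasing a complementary summand $W$, the paper simply writes down a single map out of $\mathit{bu}\wedge BSO(2n)$: each free generator $t_j$ gives a map $BSO(2n)\to\Sigma^{\alpha}H\mathbb{Z}/2$; each trivial generator $d_J$ is hit via the double cover $h_{2n}\colon BSO(2n)\to BO(2n)$ followed by Wilson--Yan's map $BO(2n)\to\Sigma^{\beta}\mathit{bu}$; and the Thom piece is the cofibre map $BSO(2n)\to MSO_{2n}$ itself smashed with $\mathit{bu}$. The wedge of these three families is then checked to be a mod~$2$ cohomology isomorphism. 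Your Ext/permanent-cycle sketch would also need, at bottom, to exhibit $\mathit{bu}$-classes on $BSO(2n)$ restricting to the $d_J$, and the route through $BO(2n)$ is exactly how one does that; once you plug that in, the two arguments coincide.
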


Before showing the construction of the splitting we first give some previous related results as background.

\begin{theorem}[Theorem 1.2 of  \cite{WiYa12}]\label{T:second}
For $\forall{n}\geq1$, there is a stable splitting
\[
   \mathit{bu}\wedge{BO(n)}\simeq[{\bigvee_{\alpha}}\Sigma^{\alpha}H\mathbb{Z}/2]\vee[{\bigvee_{\beta}}\Sigma^{\beta}\mathit{bu}]\vee[{\bigvee_{\gamma}}\Sigma^{\gamma}\mathit{bu}\wedge{\mathbb{R}P^{\infty}}], 
\]
where $\alpha=deg{d_{j}}$, $d_{j}$ are the free $E$-generators of $M$. $\beta$ and their degrees corresponds to trivial generators of $D_{1}^{\ast}$. $\gamma$ and their degrees corresponds to the monomials ${\omega_2}^{2m_{1}}{\omega_4}^{2m_{2}}\cdots{\omega_{2l}}^{2m_{l}},2l<n$.
\end{theorem}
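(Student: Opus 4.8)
The plan is to reduce the topological splitting to a purely algebraic decomposition of $\tilde H^*(BO(n))$ as a module over the exterior algebra $E = \mathbb{Z}/2\langle Q_0,Q_1\rangle$, and then to realise each algebraic summand by a standard $\mathit{bu}$-module spectrum. The starting point is the change-of-rings isomorphism: since $A$ is free as a right $E$-module and $H^*(\mathit{bu})\cong A//E$, one has an isomorphism of $A$-modules
\[
  H^*(\mathit{bu}\wedge BO(n)) \cong (A//E)\otimes\tilde H^*(BO(n)) \cong A\otimes_E \tilde H^*(BO(n)),
\]
where on the right $\tilde H^*(BO(n))$ is regarded as an $E$-module by restriction. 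Feeding this into the mod-$2$ Adams spectral sequence and applying change of rings collapses the $E_2$-term to $\mathrm{Ext}_E(\tilde H^*(BO(n)),\mathbb{Z}/2)$. Thus the entire homotopy type of $\mathit{bu}\wedge BO(n)$ is governed by the $E$-module structure of $\tilde H^*(BO(n))$, and a direct-sum decomposition of the latter should produce a wedge decomposition of the former.

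First I would classify the relevant $E$-modules. Because $E$ is the exterior algebra on the Milnor primitives, a bounded-below module of finite type is controlled by its two Margolis homology groups $H(-;Q_0)$ and $H(-;Q_1)$, and the indecomposable pieces that occur here are exactly three: the free module $\Sigma^d E$, the trivial module $\Sigma^d\mathbb{Z}/2$, and the ``lightning flash'' module $\Sigma^d\tilde H^*(\mathbb{R}P^\infty)$ on which $Q_0 t^k = k\,t^{k+1}$ and $Q_1 t^k = k\,t^{k+3}$. Under the correspondence $N\mapsto A\otimes_E N$ these realise $\Sigma^d H\mathbb{Z}/2$, $\Sigma^d\mathit{bu}$, and $\Sigma^d\mathit{bu}\wedge\mathbb{R}P^\infty$ respectively, since $A\otimes_E E = A = H^*(H\mathbb{Z}/2)$, $A\otimes_E\mathbb{Z}/2 = A//E = H^*(\mathit{bu})$, and $A\otimes_E \tilde H^*(\mathbb{R}P^\infty) = H^*(\mathit{bu}\wedge\mathbb{R}P^\infty)$.

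Next I would carry out the decomposition of $\tilde H^*(BO(n))$, the augmentation ideal of $\mathbb{Z}/2[\omega_1,\dots,\omega_n]$, as an $E$-module explicitly. Using the Wu formulas for $Q_0 = Sq^1$ and $Q_1 = Sq^3 + Sq^2Sq^1$ on the Stiefel--Whitney classes, one sorts the monomial basis into three families: the free generators $d_j$ spanning the free summand $M$ (with degrees $\alpha$), the trivial generators spanning $D_1^*$ (with degrees $\beta$), and the lightning-flash summands, each generated by one of the ``even square'' monomials $\omega_2^{2m_1}\omega_4^{2m_2}\cdots\omega_{2l}^{2m_l}$ with $2l<n$ (with degrees $\gamma$). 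The images of $\mathbb{R}P^\infty = BO(1)$ under products of the maps into $BO(n)$, together with the derivation property of $Q_0,Q_1$, make it plausible that these even-square monomials each seed a copy of $\tilde H^*(\mathbb{R}P^\infty)$; verifying that these three families exhaust $\tilde H^*(BO(n))$ as a direct sum of $E$-modules is the combinatorial heart of the argument.

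Finally I would realise the algebraic splitting by genuine maps of spectra. Using the inclusion $\mathbb{R}P^\infty\hookrightarrow BO(n)$, the multiplicative structure, and the $\mathit{bu}$-module structure of $\mathit{bu}\wedge BO(n)$, one builds maps $\Sigma^\gamma\mathit{bu}\wedge\mathbb{R}P^\infty\to\mathit{bu}\wedge BO(n)$ (and the evident maps of the $H\mathbb{Z}/2$ and $\mathit{bu}$ summands), assembles them into a single map from the wedge, and checks that it induces an isomorphism on $\mathbb{Z}/2$-homology; since all spectra involved are $2$-local, bounded below and of finite type, a Whitehead-type argument then upgrades the homology isomorphism to a stable equivalence. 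The step I expect to be the main obstacle is this geometric realisation of the lightning-flash summands: producing actual maps that hit the submodules generated by $\omega_2^{2m_1}\cdots\omega_{2l}^{2m_l}$, and confirming, via the collapse of the $\mathrm{Ext}_E$-based Adams spectral sequence, that no hidden differentials or extensions link the three families, so that the algebraic direct sum genuinely lifts to a wedge.
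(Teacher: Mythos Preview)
The paper does not contain a proof of this statement: Theorem~\ref{T:second} is quoted verbatim as background from \cite{WiYa12} and is neither proved nor sketched here, so there is no ``paper's own proof'' to compare against. Your outline is, however, a faithful summary of the strategy of \cite{WiYa12} (and of the present paper's proof of its main Theorem~\ref{T:ppppp}): pass from $H^*(\mathit{bu}\wedge BO(n))$ to $A\otimes_E\tilde H^*(BO(n))$ via Liulevicius' change-of-rings isomorphism, decompose $\tilde H^*(BO(n))$ as an $E$-module into free, trivial, and $\tilde H^*(\mathbb{R}P^\infty)$-type summands, realise each summand by an explicit map of spectra, and conclude by a mod~$2$ cohomology Whitehead argument.

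One point of divergence worth noting: you propose building maps \emph{into} $\mathit{bu}\wedge BO(n)$ from the wedge, whereas both \cite{WiYa12} and the present paper construct maps \emph{out of} $\mathit{bu}\wedge BO(n)$ (respectively $\mathit{bu}\wedge BSO(2n)$) to each wedge summand, using the $\mathit{bu}$-module structure on the target side. Either direction works in principle, but the outward direction is what is actually carried out, and it sidesteps the need to produce classes in $\mathit{bu}$-homology hitting the prescribed cohomology generators.
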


One must find the $E$-module structure of ${H}^\ast(BO(n))$ before it can show the stable splitting. Theorem 2 has its roots in Wilson's earlier paper\cite{Wi84}, when he was trying to determine the complex cobordism of $BO(n)$, which gave ${H}^\ast(BO(n))$ as a sum of $E_{j}$-modules with generators, $E_{j}=\mathbb{Z}/2\langle{Q_{0},Q_{1},\cdots,Q_{j-1}}\rangle$ is the exterior algebra of $Q_{i},0\leq{i}\leq{j}\leq{n}$. After that they went on to build a stable map by analyzing each monomial $E$-generator to see if there is a topological map that realises them. The next 2 theorems are attributed to  \cite{Wu18}:\newline

\begin{theorem}[Theorem A of  \cite{Wu18}]\label{T:ds}
For each $n\geq1$, $\tilde{H}^\ast(BSO(2n+1))$ is isomorphic to $D_{2n+1}\oplus{M_{2n+1}}$ as an $E$-module, where $D_{2n+1}$ is a trivial $E$-module with generators $d_{J}=\widehat{\omega_2}^{2m_{1}}\widehat{\omega_4}^{2m_{2}}\cdots,\widehat{\omega_{2n}}^{2m_{n}}$,  $\sum_{i=1}^{n}m_{i}>0$, $m_{i}\geq0$, and $M_{2n+1}$ is a free $E$-module with a basis of $E$-generators $T_{2n+1}=\langle{t_{i}\vert{j\in\Lambda_{2n+1}}}\rangle$.
\end{theorem}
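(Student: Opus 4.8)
The plan is to analyse $\tilde H^*(BSO(2n+1))=\mathbb{Z}/2[\widehat{\omega_2},\dots,\widehat{\omega_{2n+1}}]$ directly as an $E$-module: first pin down the trivial summand intrinsically, then show the complement is free, reducing the whole statement to a computation of Margolis homology.

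First I would read off the $E$-action from the Wu formula for $Sq^i$ on Stiefel--Whitney classes. Both $Q_0$ and $Q_1$ are derivations; one finds $Q_0\widehat{\omega_{2i}}=\widehat{\omega_{2i+1}}$, $Q_0\widehat{\omega_{2i+1}}=0$, and that $Q_1\widehat{\omega_{2i}}$ is an explicit polynomial whose only monomial not lying in $\mathbb{Z}/2[\widehat{\omega_2},\dots,\widehat{\omega_{2i+1}}]$ is $\widehat{\omega_{2i+3}}$ (present exactly when $2i+3\le 2n+1$), while $Q_1\widehat{\omega_{2i+1}}=Q_0Q_1\widehat{\omega_{2i}}$. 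Since in characteristic $2$ a derivation kills every square, the span $D_{2n+1}$ of the monomials $d_J=\widehat{\omega_2}^{2m_1}\cdots\widehat{\omega_{2n}}^{2m_n}$ with $\sum m_i>0$ is a trivial $E$-submodule. Writing $x_i=\widehat{\omega_{2i}}$, $y_i=\widehat{\omega_{2i+1}}$, the complex $(\tilde H^*(BSO(2n+1)),Q_0)$ is the tensor product over $i$ of the complexes $(\mathbb{Z}/2[x_i,y_i],Q_0)$ with differential $x_i\mapsto y_i$; as $H(\mathbb{Z}/2[x_i,y_i];Q_0)=\mathbb{Z}/2[x_i^2]$, K\"unneth identifies the $Q_0$-Margolis homology $H(\tilde H^*(BSO(2n+1));Q_0)$ with the augmentation ideal of $\mathbb{Z}/2[x_1^2,\dots,x_n^2]$, i.e.\ with $D_{2n+1}$. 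Thus in any decomposition of $\tilde H^*(BSO(2n+1))$ into free, trivial, and further indecomposable $E$-modules, the trivial part is forced to be exactly $D_{2n+1}$.

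To identify the complement as free I would use the classifying space $BO(2n+1)$. Because $O(2n+1)=SO(2n+1)\times\{\pm I\}$, one has $BO(2n+1)\simeq BSO(2n+1)\times\mathbb{R}P^\infty$, so $H^*(BO(2n+1))\cong H^*(BSO(2n+1))\otimes\mathbb{Z}/2[\widehat{\omega_1}]$ as $A$-modules, hence as $E$-modules; and since $\mathbb{Z}/2[\widehat{\omega_1}]\cong\mathbb{Z}/2\oplus\tilde H^*(\mathbb{R}P^\infty)$ as $E$-modules, $H^*(BSO(2n+1))$ is a direct $E$-summand of $H^*(BO(2n+1))$. By Theorem~\ref{T:second} (translated to cohomology, using that $Q_0$ and $Q_1$ act trivially on $H^*(bu)$) the latter is, as an $E$-module, a sum of free modules, trivial modules, and ($Q_0$-acyclic) summands of copies of $\tilde H^*(\mathbb{R}P^\infty)$, so, applying Krull--Schmidt for bounded-below, finite-type $E$-modules, it remains only to show that $H^*(BSO(2n+1))$ contains no summand of the third kind. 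As those summands are exactly the non-free $Q_0$-acyclic indecomposables, this is in turn equivalent to the equality of graded vector spaces $H(\tilde H^*(BSO(2n+1));Q_0)=H(\tilde H^*(BSO(2n+1));Q_1)$, i.e.\ by the previous paragraph to $H(\tilde H^*(BSO(2n+1));Q_1)=D_{2n+1}$. (Alternatively one can bypass $BO(2n+1)$ and build a free complement by hand: write the polynomial algebra as a free module over $R=\mathbb{Z}/2[x_1^2,\dots,x_n^2,y_1,\dots,y_n]$ on the square-free monomials in the $x_i$, exhibit an explicit family $T_{2n+1}$ of $E$-generators --- monomials of the shape ``an odd power of one $x_i$ times squares'' --- and check both that each $E\cdot t$ is free and independent of the others and of $D_{2n+1}$, and, via a Poincar\'e-series count, that the monomials produced together with those spanning $D_{2n+1}$ form an additive basis.)

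Either way the crux, and the step I expect to be the main obstacle, is the behaviour of $Q_1$, which, unlike $Q_0$, does not respect the tensor factorisation over the pairs $(x_i,y_i)$ --- the term $\widehat{\omega_{2i+3}}$ couples the factors. I would handle this by induction on $n$, with base case $BSO(3)$, where $\tilde H^*(BSO(3))=\langle\widehat{\omega_2}^{2m}:m\ge1\rangle\oplus\bigoplus_{i,j\ge0}E\cdot\bigl(\widehat{\omega_2}^{2i+1}\widehat{\omega_3}^{2j}\bigr)$ by a short direct computation using $Q_1(\widehat{\omega_2}^a\widehat{\omega_3}^b)=(a+b)\,\widehat{\omega_2}^a\widehat{\omega_3}^{b+1}$. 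The inductive step passes from $BSO(2n-1)$ to $BSO(2n+1)$ by adjoining the two polynomial generators $\widehat{\omega_{2n}},\widehat{\omega_{2n+1}}$, and the delicate point is that the $E$-module structure on the ``old'' subalgebra genuinely changes, since the formerly vanishing term $\widehat{\omega_{2i+3}}$ switches on for the top value(s) of $i$; one must show that this change only permutes basis elements inside the free part and leaves $D$ fixed (which is at least consistent with $D$ being intrinsic as a Margolis homology group). Making that bookkeeping precise, by a triangular change of the polynomial generators or a filtration by powers of $\widehat{\omega_{2n+1}}$, is the heart of the argument.
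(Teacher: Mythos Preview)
This theorem is not proved in the present paper: it is quoted as Theorem~A of \cite{Wu18}, and the paper only \emph{describes} Wu's argument in the paragraph following Theorem~\ref{T:dss}. So the comparison is between your plan and Wu's method as summarised here.

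The two routes are genuinely different. According to the paper, Wu works through the Becker--Gottlieb transfer $BSO(2n+1)\to BO(2n)$, which makes
\[
Ext_{E_{\ast}}^{1,\ast}(\mathbb{Z}/2,\tilde H_\ast(BO(2n)))\longrightarrow Ext_{E_{\ast}}^{1,\ast}(\mathbb{Z}/2,\tilde H_\ast(BSO(2n+1)))
\]
surjective; knowing the left-hand side from \cite{WiYa12}, he reads off the right-hand side and from it the $E$-module structure. Your plan instead compares with $BO(2n+1)$ via the honest product splitting $O(2n+1)\cong SO(2n+1)\times\{\pm I\}$, so that $\tilde H^\ast(BSO(2n+1))$ is an $E$-summand of $\tilde H^\ast(BO(2n+1))$, and then uses Margolis homology together with Krull--Schmidt to rule out any $\tilde H^\ast(\mathbb{R}P^{\infty})$-type summand. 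What you gain is a cleaner, intrinsic identification of $D_{2n+1}$ as $H(-;Q_0)$ and a conceptually transparent criterion (equality of the two Margolis homologies) for freeness of the complement; what Wu's transfer argument gains is that the hard computation is outsourced to the already-known structure of $\tilde H^\ast(BO(2n))$, so no separate $Q_1$-analysis is needed.

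Your outline is sound and the reduction to $H(\tilde H^\ast(BSO(2n+1));Q_1)=D_{2n+1}$ is correct; the only substantive gap is exactly the one you flag, namely carrying out that $Q_1$-computation. The proposed induction on $n$ is plausible, but the sentence ``this change only permutes basis elements inside the free part'' is where all the content lives, and as written it is an assertion rather than an argument. If you pursue this line, a cleaner way than tracking bases is to filter by total degree in the odd classes $y_i=\widehat\omega_{2i+1}$ and observe that on the associated graded the ``new'' term $\widehat\omega_{2i+3}$ in $Q_1\widehat\omega_{2i}$ strictly raises that filtration, so the $Q_1$-homology of the associated graded (which does factor as a tensor product) already equals $D_{2n+1}$, and a spectral-sequence or bounded-filtration comparison finishes it. Alternatively, you can shortcut the whole $Q_1$ step by importing the $E$-module structure of $\tilde H^\ast(BO(2n+1))$ from \cite{WiYa12} directly (not just the stable splitting) and reading off both Margolis homologies of the summand $\tilde H^\ast(BSO(2n+1))$ from there.
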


\begin{theorem}[Theorem B of  \cite{Wu18}]\label{T:dss}
For each $n\geq1$, there is a stable splitting 
\[
   \mathit{bu}\wedge{BSO(2n+1)}\simeq[{\bigvee_{\alpha}}\Sigma^{\alpha}H\mathbb{Z}/2]\vee[{\bigvee_{\beta}}\Sigma^{\beta}\mathit{bu}], 
\]
where $\alpha=deg{t_{j}}$ are the degrees of the generators of $M_{2n+1}$, and $\beta$ are the degrees of the trivial generators of $D_{2n+1}$.
\end{theorem}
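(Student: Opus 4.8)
The plan is to convert the $E$-module computation of Theorem~\ref{T:ds} into an $A$-module splitting of $H^\ast(\mathit{bu}\wedge BSO(2n+1))$ and then to lift that splitting to a splitting of spectra. The starting point is that $H^\ast(\mathit{bu})=A//E$ and, by the Milnor--Moore theorem, $A$ is free (hence flat) over the sub-Hopf-algebra $E=\mathbb{Z}/2\langle Q_0,Q_1\rangle$. Combining the Künneth isomorphism with the untwisting (shearing) isomorphism gives, as $A$-modules,
\[
 H^\ast(\mathit{bu}\wedge BSO(2n+1))\cong A//E\otimes\tilde H^\ast(BSO(2n+1))\cong A\otimes_E\tilde H^\ast(BSO(2n+1)),
\]
where the right-hand side depends only on the restriction of $\tilde H^\ast(BSO(2n+1))$ to an $E$-module. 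Thus the $A$-module type of $H^\ast(\mathit{bu}\wedge BSO(2n+1))$ is governed entirely by the $E$-module data supplied by Theorem~\ref{T:ds}.

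I would then apply the exact functor $A\otimes_E(-)$ to the decomposition $\tilde H^\ast(BSO(2n+1))\cong D_{2n+1}\oplus M_{2n+1}$. Exactness (flatness of $A$ over $E$) carries this to an $A$-module direct sum, and on the two kinds of generators one has $A\otimes_E\mathbb{Z}/2\cong A//E$ for each trivial generator $d_J$ and $A\otimes_E E\cong A$ for each free generator $t_j$. Hence
\[
 H^\ast(\mathit{bu}\wedge BSO(2n+1))\cong\Big(\bigoplus_\beta\Sigma^\beta A//E\Big)\oplus\Big(\bigoplus_\alpha\Sigma^\alpha A\Big)
\]
as $A$-modules, with $\beta=\deg d_J$ and $\alpha=\deg t_j$. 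Since $A//E=H^\ast(\mathit{bu})$ and $A=H^\ast(H\mathbb{Z}/2)$, this is exactly the cohomology of the proposed wedge, so the splitting already holds algebraically.

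The remaining task, and the heart of the argument, is to realise this algebraic splitting by maps of spectra. Each trivial generator $d_J$ is annihilated by $Q_0$ and $Q_1$, so it detects a filtration-zero class in the $\mathit{bu}$-homology Adams spectral sequence (whose $E_2$-page is $\mathrm{Ext}_E(\tilde H^\ast(BSO(2n+1)),\mathbb{Z}/2)$) and hence a $\mathit{bu}$-module map $\Sigma^\beta\mathit{bu}\to\mathit{bu}\wedge BSO(2n+1)$. Assembling these over all $d_J$ produces a map $g$ from $\bigvee_\beta\Sigma^\beta\mathit{bu}$ inducing in cohomology the projection onto the $\bigoplus_\beta\Sigma^\beta A//E$ summand. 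I would then form the cofibre $C$ of $g$; by the computation above its cohomology is the free $A$-module $\bigoplus_\alpha\Sigma^\alpha A$, and a connective finite-type spectrum with free mod $2$ cohomology is a wedge of suspensions of $H\mathbb{Z}/2$, so $C\simeq\bigvee_\alpha\Sigma^\alpha H\mathbb{Z}/2$. It then suffices to split the cofibre sequence, i.e. to show the connecting map $\bigvee_\alpha\Sigma^\alpha H\mathbb{Z}/2\to\bigvee_\beta\Sigma^{\beta+1}\mathit{bu}$ is null.

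I expect this last point to be the main obstacle, together with the precise construction of $g$: one must produce the maps from the $d_J$ with the stated cohomological effect, identify the cofibre, and prove the connecting map vanishes, which is equivalent to the absence of hidden extensions between the $H\mathbb{Z}/2$- and $\mathit{bu}$-summands. Here the explicit ``free $\oplus$ trivial'' form of Theorem~\ref{T:ds}, with named generators $d_J=\widehat{\omega_2}^{2m_1}\cdots\widehat{\omega_{2n}}^{2m_n}$ and free $E$-generators $t_j$, is what makes both the realisation and the vanishing tractable: the relevant obstruction groups $[\Sigma^\alpha H\mathbb{Z}/2,\Sigma^{\beta+1}\mathit{bu}]$ can be analysed degree by degree using $H^\ast(\mathit{bu})=A//E$ and the connectivity of $\mathit{bu}$, which for the odd groups $SO(2n+1)$ leaves no $\mathit{bu}\wedge\mathbb{R}P^\infty$-type summand to complicate the comparison.
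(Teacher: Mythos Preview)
Your algebraic half is essentially what the paper (and Wu's paper, as summarized here) uses: the shearing isomorphism $H^\ast(\mathit{bu}\wedge X)\cong A\otimes_E\tilde H^\ast(X)$ together with the $E$-module decomposition $\tilde H^\ast(BSO(2n+1))\cong D_{2n+1}\oplus M_{2n+1}$ immediately gives the $A$-module splitting into copies of $A//E$ and $A$.

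Where you diverge is in the topological realisation, and this is where your proposal has a real gap. The paper's route (described after Theorem~\ref{T:dss} and carried out in detail in the proof of Theorem~\ref{T:ppppp}) builds a map \emph{out} of $\mathit{bu}\wedge BSO(2n+1)$ into the wedge. For the free generators $t_j$ one simply represents each as a cohomology class $BSO(2n+1)\to\Sigma^\alpha H\mathbb{Z}/2$ and smashes with $\mathit{bu}$. For the trivial generators $d_J$ one does \emph{not} appeal to the Adams spectral sequence to produce maps into $\mathit{bu}\wedge BSO(2n+1)$; instead one uses the geometric map $BSO(2n+1)\to BO(2n)$ (coming from the Becker--Gottlieb transfer; in the proof of Theorem~\ref{T:ppppp} the analogous role is played by the $2$-fold map $h_{2n}\colon BSO(2n)\to BO(2n)$), composes with the maps $\phi\colon BO(2n)\to\Sigma^\beta\mathit{bu}$ already built by Wilson--Yan, and then uses the ring multiplication of $\mathit{bu}$. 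The wedge of all these outgoing maps is then checked to induce the cohomology isomorphism, and one is done.

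Your route instead builds maps $\Sigma^\beta\mathit{bu}\to\mathit{bu}\wedge BSO(2n+1)$, takes a cofibre, and must then show the connecting map $\bigvee_\alpha\Sigma^\alpha H\mathbb{Z}/2\to\bigvee_\beta\Sigma^{\beta+1}\mathit{bu}$ is null. You correctly flag this as the main obstacle, but you do not resolve it, and it is not a formality: the groups $[\Sigma^\alpha H\mathbb{Z}/2,\Sigma^{\beta+1}\mathit{bu}]$ are governed by $H^\ast(\mathit{bu})=A//E$ and are typically nonzero, so vanishing of the connecting map genuinely needs an argument. The paper's ``maps out'' strategy avoids this difficulty entirely---there is no cofibre sequence to split---and the input it uses (the transfer to $BO(2n)$ and the Wilson--Yan maps) is exactly what makes the odd case tractable while the even case is not.
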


Some definitions here. In terms of $E$-modules, we mean a $\mathbb{Z}/2$-module $M$ equipped with the module homomorphism $\varphi:E\otimes{M}\rightarrow{M}$, defined by $E$-actions on the elements of $M$,  that is $\varphi(e\otimes{m})=e(m),e\in{E}$. A $E$-module is free if it has a basis of free $E$-generators. A trivial $E$-module has a generating set of elements such that every $E$-action excluding the identity acting on it results in zero. Now to show the splitting of  $\mathit{bu}\wedge{BSO(2n+1)}$, first one uses the Adams spectral sequence \cite{Adams74} to calculate the $E_{2}^{1,\ast}$ term of $\tilde{\mathit{bu}}_{\ast}(BO(n))$, which is the group $Ext_{E}^{1,\ast}(\tilde{H}^{\ast}(BO(n)),\mathbb{Z}/2)\cong{Ext_{E_{\ast}}^{1,\ast}(\mathbb{Z}/2,\tilde{H}_{\ast}(BO(n)))}$, where $E_{\ast}=\mathbb{Z}/2\langle{\xi_{1},\xi_{2}}\rangle$ is the exterior algebra over the Milnor generators $\xi_{1}$ and $\xi_{2}$ of the dual Steenrod algebra $A_{\ast}=\mathbb{Z}/2[\xi_{1},\xi_{2},\cdots]$. The $E$-module structure of $\tilde{H}^{\ast}(BO(n))$ has already been determined in  \cite{WiYa12}, so one can calculate the Ext groups from the bar and cobar resolutions. In \cite{Wu18} one uses the epimorphism 
\[
  Ext_{E_{\ast}}^{1,\ast}(\mathbb{Z}/2,\tilde{H}_{\ast}(BO(2n)))\rightarrow{Ext_{E_{\ast}}^{1,\ast}(\mathbb{Z}/2,\tilde{H}_{\ast}(BSO(2n+1)))}, 
\]
to track the generators in 
\[
  Ext_{E_{\ast}}^{1,\ast}(\mathbb{Z}/2,\tilde{H}_{\ast}(BSO(2n+1)))\cong{Ext_{E}^{1,\ast}(\tilde{H}^{\ast}(BSO(2n+1)),\mathbb{Z}/2)}. 
\]
With the knowledge of $Ext_{E}^{1,\ast}(\tilde{H}^{\ast}(BSO(2n+1)),\mathbb{Z}/2)$, the $E$-module structure of\newline$\tilde{H}^\ast(BSO(2n+1))$ can be determined, and with it the topological splitting of $\mathit{bu}\wedge{BSO(2n+1)}$. Unfortunately this fails for the even case because the homomorphism of the Ext groups 
\[
  Ext_{E_{\ast}}^{1,\ast}(\mathbb{Z}/2,\tilde{H}_{\ast}(BO(2n-1)))\rightarrow{Ext_{E_{\ast}}^{1,\ast}(\mathbb{Z}/2,\tilde{H}_{\ast}(BSO(2n)))}
\]
 is not surjective because there is no stable Becker-Gottlieb transfer of the kind $BSO(2n)\rightarrow{BO(2n-1)}$ (see \cite{BeGo75}).

Luckily the splitting of $\mathit{bu}\wedge{BSO(2n)}$ is actually related to $\mathit{bu}\wedge{BSO(2n-1)}$ through its $E$-module structure, in simple words $\tilde{H}^\ast(BSO(2n-1))$ is an $E$-submodule of $\tilde{H}^\ast(BSO(2n))$. With this special property  we can draw out the general $E$-module structure of $\tilde{H}^\ast(BSO(2n))$ and find a suitable splitting.

\section{$E$-module structure of $\tilde{H}^\ast(BSO(2n))$}\label{s:module}

Before we prove our results some basic machinery must be mentioned. The Milnor primitives $Q_{i}$ are related to the Steenrod Squares \cite{St62} as $Q_{0}=Sq^{1},Q_{1}=Sq^{3}+Sq^{2}Sq^{1}$, so to compute $Q_{i}(\widehat\omega_{k})$ one must use the Wu formula: 

\begin{proposition}[Wu formula \cite{Wu48}]
For the Stiefel-Whitney classes  $\omega_m$, 
\[
  Sq^{k}(\omega_m)=\sum_{t=0}^{k}{m-k+t-1 \choose t}\omega_{k-t}\omega_{m+t},m\geq{k}.
\]
\end{proposition}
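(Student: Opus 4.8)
The plan is to derive this as the classical Wu formula \cite{Wu48} by means of the splitting principle, reducing the identity to a manipulation of elementary symmetric polynomials over $\mathbb{Z}/2$. First I would observe that, by naturality of the Steenrod squares and of the Stiefel--Whitney classes, it suffices to prove the formula for the universal class $\omega_m$, i.e.\ in $H^\ast(BO(N))$ for all sufficiently large $N$ (the case of $\tilde{H}^\ast(BSO(n))$, and of the hatted classes $\widehat{\omega_j}$, then following by pulling back along $BSO(n)\to BO(n)\to BO(N)$). The classifying map $BO(1)^{N}\to BO(N)$ induces a monomorphism $H^\ast(BO(N))\hookrightarrow H^\ast(BO(1)^{N})=\mathbb{Z}/2[x_1,\dots,x_N]$ onto the ring of symmetric polynomials, carrying $\omega_m$ to the $m$-th elementary symmetric polynomial $e_m(x_1,\dots,x_N)$; so it is enough to verify the identity after this restriction.

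In the polynomial ring each $x_j$ has degree $1$, so $Sq^0x_j=x_j$, $Sq^1x_j=x_j^2$, and $Sq^ix_j=0$ for $i\geq2$, whence the total square is $Sq(x_j)=x_j+x_j^2=x_j(1+x_j)$. Since $Sq=\sum_i Sq^i$ is a ring homomorphism (Cartan formula),
\[
  Sq(\omega_m)=Sq\bigl(e_m(x_1,\dots,x_N)\bigr)=e_m\bigl(x_1(1+x_1),\dots,x_N(1+x_N)\bigr),
\]
and $Sq^k(\omega_m)$ is precisely the homogeneous summand of degree $m+k$ on the right. What remains is the purely algebraic task of re-expanding this symmetric polynomial in the $\omega_i=e_i(x)$ and extracting the coefficient of each product $\omega_{k-t}\omega_{m+t}$.

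To organize the re-expansion I would bring in a bookkeeping indeterminate $u$ that counts factors rather than records cohomological degree, so that
\[
  \sum_{m\geq0}Sq(\omega_m)\,u^m=\prod_{j=1}^{N}\bigl(1+u\,x_j(1+x_j)\bigr).
\]
Over an extension of $\mathbb{Z}/2[u]$ one factors $1+ux(1+x)=(1+ax)(1+bx)$ with $a+b=u$ and $ab=u$, so the product becomes $\bigl(\sum_i\omega_ia^i\bigr)\bigl(\sum_l\omega_lb^l\bigr)$; symmetrizing via $a^ib^l+a^lb^i=(ab)^i(a^{l-i}+b^{l-i})$ for $i\leq l$ and using that the power sums $p_r=a^r+b^r$ obey the Newton recursion $p_r=u\,p_{r-1}+u\,p_{r-2}$ in characteristic $2$ (so that $p_r$ is an explicit binomial polynomial in $u$), one substitutes back, reads off the coefficient of $u^m$, and then takes the part of cohomological degree $m+k$. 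Since the $\omega_i$ are algebraically independent for $N$ large, matching coefficients yields exactly $\sum_{t=0}^{k}\binom{m-k+t-1}{t}\omega_{k-t}\omega_{m+t}$.

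The delicate step is this last one: untangling the double sum so that the binomial coefficient surviving modulo $2$ is precisely $\binom{m-k+t-1}{t}$, and doing so while keeping the counting variable $u$ rigorously separate from the internal grading that is used to isolate $Sq^k$ inside $Sq$. Everything else is formal, and for the purposes of this paper one only ever needs the cases $k=1,2,3$ (to compute $Q_0=Sq^1$ and $Q_1=Sq^3+Sq^2Sq^1$ on the $\widehat{\omega_j}$), where the symmetric-polynomial expansion collapses to a short direct verification; but the combinatorial identity for the binomial coefficients remains the heart of the matter in general.
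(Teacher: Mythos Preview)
The paper does not actually prove this proposition: it is simply stated as the classical Wu formula with a citation to \cite{Wu48}, and is used only as input for computing $Q_0$ and $Q_1$ on the $\widehat{\omega_j}$. So there is no ``paper's own proof'' to compare against.

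That said, your outline is the standard splitting-principle argument (essentially the proof one finds in Milnor--Stasheff \cite{MiSt74}, already in the paper's bibliography), and it is sound: reduce to the universal case, push into $\mathbb{Z}/2[x_1,\dots,x_N]^{S_N}$, use $Sq(x_j)=x_j(1+x_j)$ and the Cartan formula, then do the generating-function factorisation $1+ux(1+x)=(1+ax)(1+bx)$ with $a+b=ab=u$ and extract coefficients. Your own diagnosis of the one genuinely nontrivial point---tracking the two gradings (the auxiliary $u$-degree versus the internal cohomological degree) cleanly enough to see the binomial $\binom{m-k+t-1}{t}$ emerge---is accurate; everything else is formal. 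For the paper's actual needs only $k\leq 3$ is required, and there, as you note, the expansion is a short direct check.
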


Take note that $Q_{i}$ has the property $Q_{i}(xy)=Q_{i}(x)y+xQ_{i}(y)$, which is helpful in our calculations.  For $H^{\ast}(BSO(n))$, if $2m\leq{n}$, then $Q_{0}(\widehat\omega_{2m})=\widehat\omega_{2m+1}$, and $Q_{1}(\widehat\omega_{2m})=\widehat\omega_3\widehat\omega_{2m}+\widehat\omega_{2m+3}$. While for $2m+1\leq{n}$, $Q_{0}(\widehat\omega_{2m+1})=0$,  $Q_{1}(\widehat\omega_{2m+1})=\widehat\omega_3\widehat\omega_{2m+1}$. We now find the $E$-Module structure of $\tilde{H}^\ast(BSO(2n))$. Since this is a polynomial ring and the Stiefel Whitney classes are algebraically independent, it can also be viewed as $\mathbb{Z}/2[\widehat{\omega_2},\widehat{\omega_3},\cdots,\widehat{\omega_{2n-1}}][\widehat{\omega_{2n}}]$. Then ${H}^\ast(BSO(2n))$ and ${H}^\ast(BSO(2n-1))$ are related as polynomial rings 
\[
   {H}^\ast(BSO(2n))\cong\bigoplus_{k\geq0}{H}^\ast(BSO(2n-1))\widehat{\omega_{2n}}^{k}. 
\]
The generators of the $E$-modules $D_{2n-1}$ and $M_{2n-1}$ is described in \cite{Wu18}, but the monomial generators in  $\tilde{H}^\ast(BSO(2n-1))$ might not behave the same way when applied to the $E$-action in $\tilde{H}^\ast(BSO(2n))$, owing to the fact that it has an extra variable $\widehat{\omega_{2n}}$. Luckily Proposition 1 tells us that all the $E$-actions of $\widehat{\omega_{i}},2\leq{i}\leq{2n-1}$ in $\tilde{H}^\ast(BSO(2n-1))$ remain the same as in $\tilde{H}^\ast(BSO(2n))$, so it's an $E$-submodule of $\tilde{H}^{\ast}(BSO(2n))$. We only need to worry about $\widehat{\omega_{2n}}$. $Q_{0}(\widehat{\omega_{2n}})=0$, $Q_{1}(\widehat{\omega_{2n}})=\widehat{\omega_{3}}\widehat{\omega_{2n}}$, and no $\widehat{\omega_{i}}$ can map to $\widehat{\omega_{2n}}$ through the $E$-actions, so this actually says that every  ${H}^\ast(BSO(2n-1))\widehat{\omega_{2n}}^{k}$ is in fact an $E$-submodule of $\tilde{H}^\ast(BSO(2n))$:

\begin{lemma}
For every  $k\geq1, {H}^\ast(BSO(2n-1))\widehat{\omega_{2n}}^{k}$ is an $E$-submodule of\newline $\tilde{H}^\ast(BSO(2n))$, and $\tilde{H}^\ast(BSO(2n-1))$ is an $E$-submodule of $\tilde{H}^\ast(BSO(2n))$.
\end{lemma}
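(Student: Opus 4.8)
The plan is to verify two things: first that each $H^\ast(BSO(2n-1))\widehat{\omega_{2n}}^{k}$ is closed under the $E$-action inside $\tilde H^\ast(BSO(2n))$, and second that $\tilde H^\ast(BSO(2n-1))$ (the $k=0$ summand) is likewise $E$-stable. Since $E=\mathbb{Z}/2\langle Q_0,Q_1\rangle$ is generated as an algebra by $Q_0$ and $Q_1$, it suffices to check that $Q_0$ and $Q_1$ carry each of these $\mathbb{Z}/2$-submodules into itself. Because both $Q_i$ are derivations (the Cartan-type formula $Q_i(xy)=Q_i(x)y+xQ_i(y)$ recorded just above), everything reduces to their effect on the generators $\widehat{\omega_j}$.

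First I would record, using the explicit formulas stated before the lemma, that for $2\le j\le 2n-1$ the classes $Q_0(\widehat{\omega_j})$ and $Q_1(\widehat{\omega_j})$ lie in $H^\ast(BSO(2n-1))$ — indeed $Q_0(\widehat\omega_{2m})=\widehat\omega_{2m+1}$, $Q_0(\widehat\omega_{2m+1})=0$, $Q_1(\widehat\omega_{2m})=\widehat\omega_3\widehat\omega_{2m}+\widehat\omega_{2m+3}$ and $Q_1(\widehat\omega_{2m+1})=\widehat\omega_3\widehat\omega_{2m+1}$, and in every case the subscripts appearing on the right are still $\le 2n-1$ because the input subscript is $\le 2n-2$ (the only borderline case being $\widehat\omega_{2n-1}$, for which $Q_0$ kills it and $Q_1(\widehat\omega_{2n-1})=\widehat\omega_3\widehat\omega_{2n-1}$ stays in range). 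Hence $\tilde H^\ast(BSO(2n-1))$ is $Q_0$- and $Q_1$-stable, which gives the second assertion of the lemma. For the first assertion, write a typical element of $H^\ast(BSO(2n-1))\widehat{\omega_{2n}}^{k}$ as $x\widehat{\omega_{2n}}^{k}$ with $x\in H^\ast(BSO(2n-1))$. Applying the derivation property and $Q_i(\widehat\omega_{2n}^{k})=k\,\widehat\omega_{2n}^{k-1}Q_i(\widehat\omega_{2n})$ (only the $k$ odd case survives mod $2$), together with $Q_0(\widehat\omega_{2n})=0$ and $Q_1(\widehat\omega_{2n})=\widehat\omega_3\widehat\omega_{2n}$, one gets $Q_0(x\widehat\omega_{2n}^{k})=Q_0(x)\widehat\omega_{2n}^{k}$ and $Q_1(x\widehat\omega_{2n}^{k})=Q_1(x)\widehat\omega_{2n}^{k}+k\,x\widehat\omega_3\widehat\omega_{2n}^{k}$; by the previous step $Q_0(x),Q_1(x)\in H^\ast(BSO(2n-1))$ and $\widehat\omega_3\in H^\ast(BSO(2n-1))$, so the whole expression lies in $H^\ast(BSO(2n-1))\widehat\omega_{2n}^{k}$, as required.

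The only genuinely delicate point — and the one I would write out most carefully — is the claim that "no $\widehat\omega_i$ can map to $\widehat\omega_{2n}$ through the $E$-actions," i.e. that the action of $Q_0$ and $Q_1$ never produces a term involving $\widehat\omega_{2n}$ out of the variables $\widehat\omega_2,\dots,\widehat\omega_{2n-1}$; this is what makes the decomposition $H^\ast(BSO(2n))=\bigoplus_k H^\ast(BSO(2n-1))\widehat\omega_{2n}^{k}$ a decomposition of $E$-modules rather than merely of graded vector spaces. This is immediate from the explicit formulas: the subscripts produced by $Q_0$ and $Q_1$ from an input $\widehat\omega_j$ with $j\le 2n-1$ are at most $j+3$, and the factor $\widehat\omega_{2n}$ would require a subscript exactly $2n$ to appear as a \emph{new} variable, which cannot happen from $j\le 2n-1$ except via $Q_1(\widehat\omega_{2n-3})=\widehat\omega_3\widehat\omega_{2n-3}+\widehat\omega_{2n}$ — so one must in fact exclude $\widehat\omega_{2n-3}$ from the claim, meaning the correct statement is that inside $H^\ast(BSO(2n))$ the action does couple the subspaces in that one instance. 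I would therefore re-examine whether the splitting is over $E$ literally, or whether one should instead argue that each $H^\ast(BSO(2n-1))\widehat\omega_{2n}^{k}$ is still $E$-stable (which is all the lemma actually asserts, and which the computation above establishes directly without needing the "nothing maps onto $\widehat\omega_{2n}$" claim at all). In short, the main obstacle is bookkeeping the Wu-formula subscripts honestly; once that is done, the lemma follows from the derivation property and the listed values of $Q_0,Q_1$ on the Stiefel–Whitney generators.
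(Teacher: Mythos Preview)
Your first two paragraphs constitute a correct proof and are essentially identical in strategy to the paper's: use the derivation property of $Q_0,Q_1$, the listed values on the Stiefel--Whitney generators, and the formulas $Q_0(\widehat\omega_{2n})=0$, $Q_1(\widehat\omega_{2n})=\widehat\omega_3\widehat\omega_{2n}$ to conclude $Q_i(x\widehat\omega_{2n}^{\,k})=(Q_i(x)+ik\widehat\omega_3 x)\widehat\omega_{2n}^{\,k}$. That is exactly the paper's computation.

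Your third paragraph, however, rests on a parity slip. The index $2n-3$ is \emph{odd}, so the relevant formula is $Q_1(\widehat\omega_{2n-3})=\widehat\omega_3\widehat\omega_{2n-3}$, with no $\widehat\omega_{2n}$ term; you have mistakenly applied the even-index formula $Q_1(\widehat\omega_{2m})=\widehat\omega_3\widehat\omega_{2m}+\widehat\omega_{2m+3}$ to an odd subscript. In fact the only ``new'' subscripts produced by $Q_0$ or $Q_1$ acting on $\widehat\omega_j$ are $j+1$ (from $Q_0$ on even $j$) and $j+3$ (from $Q_1$ on even $j$), both of which are odd; hence $\widehat\omega_{2n}$ can never appear when starting from $\widehat\omega_j$ with $j\le 2n-1$. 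So the paper's claim that ``no $\widehat\omega_i$ can map to $\widehat\omega_{2n}$ through the $E$-actions'' is genuinely true, the direct-sum decomposition \emph{is} one of $E$-modules, and your proposed re-examination is unnecessary. You are right that the lemma as stated only needs closure of each piece, which your first two paragraphs already establish; but you should delete the spurious counterexample rather than hedge around it.
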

\begin{proof}
For any $x\in\tilde{H}^\ast(BSO(2n-1))$, $Q_{i}(x)$ does not contain any factor of $\widehat{\omega_{2n}}$ in $\tilde{H}^\ast(BSO(2n))$, plus the $E$-actions of $\widehat{\omega_{i}}$ remains the same when they belong to $\tilde{H}^\ast(BSO(2n-1)),\tilde{H}^\ast(BSO(2n))$ respectively. As for $x\widehat{\omega_{2n}}^{k}\in{H}^\ast(BSO(2n-1))\widehat{\omega_{2n}}^{k}$, $Q_{i}(x\widehat{\omega_{2n}}^{k})=(Q_{i}(x)+ik\widehat{\omega_{3}}x)\widehat{\omega_{2n}}^{k}\in{H}^\ast(BSO(2n-1))\widehat{\omega_{2n}}^{k}$, also closed under $E$-actions.
\end{proof}
So one can see that  $\tilde{H}^\ast(BSO(2n))$ is actually an infinite direct sum of $E$-submodules ${H}^\ast(BSO(2n-1))\widehat{\omega_{2n}}^{k}$ and $\tilde{H}^\ast(BSO(2n-1))$. Naturally one would go on and analyze each submodule to decompose them into a sum of $E$-generators, but that is not needed. The main reason is even if we did, then we might not be able to find a suitable spectra representing them. Now $\tilde{H}^\ast(BSO(2n-1))\cong{D_{2n-1}}\oplus{M_{2n-1}}$, and the quotient $\tilde{H}^\ast(BSO(2n))/\tilde{H}^\ast(BSO(2n-1))\cong\bigoplus_{k\geq1}{H}^\ast(BSO(2n-1))\widehat{\omega_{2n}}^{k}={H}^\ast(BSO(2n))\widehat{\omega_{2n}}$ actually corresponds to the cohomology of the Thom space $MSO_{2n}$ of $BSO(2n)$, which by the Thom isomorphism theorem, $\tilde{H}^\ast(MSO_{2n})\cong{H}^\ast(BSO(2n))\mu_{2n}$, $\mu_{2n}$ the unique Thom class of degree $2n$. One can see this by looking at the sphere bundle map $BSO(2n-1)\rightarrow{BSO(2n)}\rightarrow{MSO_{2n}}$, where $MSO_{2n}$ is the mapping cone \cite{Sw75}. Taking the cohomology we have a short exact sequence
\[
  0\rightarrow\tilde{H}^\ast(MSO_{2n})\rightarrow\tilde{H}^\ast(BSO(2n))\rightarrow\tilde{H}^\ast(BSO(2n-1))\rightarrow0
\]
Which sends $\mu_{2n}$ to $\widehat{\omega_{2n}}$. Combined with the facts above, this can be seen as a split exact sequence. All we need to verify is that the $E$-module structure of $\tilde{H}^\ast(MSO_{2n})$ is the same as $\tilde{H}^\ast(BSO(2n))/\tilde{H}^\ast(BSO(2n-1))$.
\begin{lemma}
As $E$-modules, $\tilde{H}^\ast(BSO(2n))\cong{\tilde{H}^\ast(BSO(2n-1))\oplus{\tilde{H}^\ast(MSO_{2n})}}$.
\end{lemma}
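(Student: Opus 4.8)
The plan is to promote the split short exact sequence
\[
  0\rightarrow\tilde{H}^\ast(MSO_{2n})\rightarrow\tilde{H}^\ast(BSO(2n))\xrightarrow{\ r\ }\tilde{H}^\ast(BSO(2n-1))\rightarrow0
\]
to a splitting of $E$-modules, and then to identify the kernel term with $\tilde{H}^\ast(MSO_{2n})$ as an $E$-module. First I would note that $r$, being induced by the space-level map $BSO(2n-1)\to BSO(2n)$, is a map of $A$-modules, hence of $E$-modules, and that the inclusion $\iota\colon\tilde{H}^\ast(BSO(2n-1))\hookrightarrow\tilde{H}^\ast(BSO(2n))$ sending a polynomial in $\widehat{\omega_2},\dots,\widehat{\omega_{2n-1}}$ to itself is an $E$-module map by the previous lemma. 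Since $r$ kills $\widehat{\omega_{2n}}$ and fixes the remaining generators, $r\circ\iota=\mathrm{id}$, so $\iota$ is an $E$-linear section of $r$. Consequently $\tilde{H}^\ast(BSO(2n))\cong\tilde{H}^\ast(BSO(2n-1))\oplus\ker r$ as $E$-modules, and from the polynomial decomposition $\ker r=\bigoplus_{k\geq1}H^\ast(BSO(2n-1))\widehat{\omega_{2n}}^{k}=H^\ast(BSO(2n))\widehat{\omega_{2n}}$.

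Next I would identify $\ker r$ with $\tilde{H}^\ast(MSO_{2n})$. By the Thom isomorphism $\tilde{H}^\ast(MSO_{2n})$ is the free $H^\ast(BSO(2n))$-module on the Thom class $\mu_{2n}$, so $\Theta\colon\tilde{H}^\ast(MSO_{2n})\to\ker r$, $x\mu_{2n}\mapsto x\widehat{\omega_{2n}}$, is an isomorphism of $H^\ast(BSO(2n))$-modules. To see it is $E$-linear it suffices, because $Q_0$ and $Q_1$ are derivations and $\Theta$ is $H^\ast(BSO(2n))$-linear, to check that $Q_i(\mu_{2n})$ is carried to $Q_i(\widehat{\omega_{2n}})$. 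The Thom-space Wu formula (equivalently, Thom's defining property of the Stiefel--Whitney classes) gives $Sq^{j}(\mu_{2n})=\widehat{\omega_j}\mu_{2n}$, so $Q_0(\mu_{2n})=Sq^1(\mu_{2n})=\widehat{\omega_1}\mu_{2n}=0$ and $Q_1(\mu_{2n})=(Sq^3+Sq^2Sq^1)(\mu_{2n})=\widehat{\omega_3}\mu_{2n}$, using $\widehat{\omega_1}=0$ in $BSO$. These match $Q_0(\widehat{\omega_{2n}})=0$ and $Q_1(\widehat{\omega_{2n}})=\widehat{\omega_3}\widehat{\omega_{2n}}$ recorded earlier, so $\Theta$ is an isomorphism of $E$-modules and $\ker r\cong\tilde{H}^\ast(MSO_{2n})$.

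Combining the two steps yields $\tilde{H}^\ast(BSO(2n))\cong\tilde{H}^\ast(BSO(2n-1))\oplus\tilde{H}^\ast(MSO_{2n})$ as $E$-modules. The only real computation is the value of $Q_1$ on the Thom class, and the one point requiring care is that the ad hoc module isomorphism $\Theta$ respects the $E$-action and not merely the $H^\ast(BSO(2n))$-action; once the action on $\mu_{2n}$ is matched with that on $\widehat{\omega_{2n}}$, the derivation property of $Q_0,Q_1$ propagates compatibility to all of $\tilde{H}^\ast(MSO_{2n})$, so this is the step I expect to be the main (though mild) obstacle.
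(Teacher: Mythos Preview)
Your proposal is correct and follows essentially the same approach as the paper: both arguments first split off $\tilde{H}^\ast(BSO(2n-1))$ as an $E$-summand (the paper via Lemma~1, you via an explicit $E$-linear section of $r$), then identify $\ker r = H^\ast(BSO(2n))\widehat{\omega_{2n}}$ with $\tilde{H}^\ast(MSO_{2n})$ by checking that $Q_i(\mu_{2n})$ matches $Q_i(\widehat{\omega_{2n}})$ using Thom's formula $Sq^j(\mu_{2n})=\widehat{\omega_j}\mu_{2n}$ and the derivation property of $Q_0,Q_1$. Your write-up is in fact slightly more explicit about why the section is $E$-linear and why matching the action on the Thom class suffices, but the underlying argument is the same as the paper's.
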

\begin{proof}
We need to show the $E$-actions of elements in $\tilde{H}^\ast(MSO_{2n})$ is the same as the corresponding elements in $\tilde{H}^\ast(BSO(2n))\widehat{\omega_{2n}}$. Remember that $\widehat{\omega_{i}}=\Phi^{-1}(Sq^{i}(\mu_{2n}))$. Where $\Phi:{H}^{m}(BSO(2n))\cong{H}^{m+2n}(MSO_{2n})$ is the Thom isomorphism by $\Phi(x)=p^{\ast}(x)\mu_{2n}$ with $p:\gamma^{n}\rightarrow{BSO(2n)}$, so one can see $Sq^{i}(\mu_{2n})$ as $\widehat{\omega_{i}}\mu_{2n}$. For any $z\mu_{2n}\in\tilde{H}^\ast(MSO_{2n}),z\in{H}^\ast(BSO(2n))$, we have $Q_{i}(z\mu_{2n})=Q_{i}(z)\mu_{2n}+zQ_{i}(\mu_{2n})=(Q_{i}(z)+iz\widehat{\omega_{3}})\mu_{2n}$, which behaves exactly the same as any $x\widehat{\omega_{2n}}\in{H}^\ast(BSO(2n))\widehat{\omega_{2n}}$, plus the fact that $\mu_{2n}$ is freely generated over ${H}^\ast(BSO(2n))$ and cannot be generated from ${H}^\ast(BSO(2n))$ by the $E$-actions, so $\tilde{H}^\ast(MSO_{2n})\cong{H}^\ast(BSO(2n))\widehat{\omega_{2n}}$ as $E$-modules.
\end{proof}

\begin{remark}
From Lemma 1, each $E$-submodule ${H}^\ast(BSO(2n-1))\widehat{\omega_{2n}}^{k}$ can be decomposed as sums of $E$-monomial generators. If $k$ is even, then it consists of free generators $t_{i}\widehat{\omega_{2n}}^{k}$ and trivial generators $d_{J}\widehat{\omega_{2n}}^{k}$, $t_{i}\in{M_{2n-1}},d_{J}\in{D_{2n-1}}$. But for $k$ odd, besides the free $E$-generators it contains, it has a set of $E$-generators of the form $t_{j}\widehat{\omega_{2n}}^{k},d_{J}\widehat{\omega_{2n}}^{k}$ which are subject to the relation $Q_{0}Q_{1}=0$. The free and trivial generators have spectra to identify with, but the relational generators does not correspond to any known spectra. This is why we do not decompose the submodules furthur.
\end{remark}

Hence every $E$-generator of ${H}^\ast(BSO(2n))\widehat{\omega_{2n}}$ can correspond to a generator of $\tilde{H}^\ast(MSO_{2n})$, and vice versa. With the $E$-module structure of $\tilde{H}^\ast(BSO(2n))$ settled, the stable splitting can be constructed.

\section{Splitting of the spectra}\label{s:topological}
We now can find the stable splitting of $\mathit{bu}\wedge{BSO(2n)}$. The $E$-module structure tells us certain information about what the splitting of the spectra looks like. But first we have to use Liulevicius's theorem \cite{Liu68} to get from the original cohomology ring to the $E$-module algebra:
\begin{proposition}[Proposition1.7 of \cite{Liu68}]
$A$ is a Hopf algebra, and $B$ is a Hopf subalgebra of $A$, while $M$ and $N$ are left $A$-module and left $B$-modules respectively. We have the isomorphism 
\[
   {}_D\![M\otimes(A\otimes_{E}N)]\cong{}_L\![A\otimes_{B}{}_D\!(M\otimes{N})]
\]
 as left $A$-modules. 
\end{proposition}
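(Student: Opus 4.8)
The displayed statement is the classical \emph{projection formula} (or ``tensor identity'') for the induction functor $A\otimes_B(-)$, here transcribed into the present homological-algebra setting. The decorations ${}_D$ and ${}_L$ record the module structures: on $M\otimes(A\otimes_B N)$ the symbol $D$ means $A$ acts diagonally, where $A$ acts on $A\otimes_B N$ by left multiplication on the left factor; on $A\otimes_B(M\otimes N)$ the symbol $L$ means $A$ acts by left multiplication, while $M\otimes N$ carries the diagonal $B$-action obtained by restricting the $A$-action on $M$ along $B\hookrightarrow A$. (The ``$\otimes_E$'' in the formula is a transcription of Liulevicius's ``$\otimes_B$''; it is specialised to $B=E$ only when the proposition is later applied to $\mathit{bu}$.) The plan is to exhibit a pair of mutually inverse maps of left $A$-modules between the two sides, so nothing beyond linear algebra with the Hopf structure is involved.

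Write $\psi(a)=\sum a'\otimes a''$ for the coproduct of $A$ in Sweedler notation, iterated as $\sum a'\otimes a''\otimes a'''$ by coassociativity, and let $S$ and $\varepsilon$ be the antipode and counit. Since $B$ is a Hopf \emph{sub}algebra we have $\psi(B)\subseteq B\otimes B$ and $S(B)\subseteq B$, which is precisely what makes the diagonal $B$-module structure on $M\otimes N$ and the formulas below legitimate. Define
\[
  \Theta\colon A\otimes_B(M\otimes N)\longrightarrow M\otimes(A\otimes_B N),\qquad
  a\otimes_B(m\otimes n)\longmapsto \sum a'm\otimes(a''\otimes_B n),
\]
\[
  \Phi\colon M\otimes(A\otimes_B N)\longrightarrow A\otimes_B(M\otimes N),\qquad
  m\otimes(a\otimes_B n)\longmapsto \sum a''\otimes_B\bigl(S(a')m\otimes n\bigr).
\]
First one checks that $\Theta$ and $\Phi$ are well defined, i.e.\ compatible with the defining relations $ab\otimes_B x\sim a\otimes_B bx$ for $b\in B$ in the source and target relative tensor products; this uses only $\psi(b)\in B\otimes B$, $S(b)\in B$, the multiplicativity of $\psi$, and, for $\Theta$, the fact that $A\otimes_B N$ already absorbs the $B$-factor. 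Next, $\Theta$ is a map of left $A$-modules: applying $a_{0}\cdot(-)$ before or after $\Theta$ both give $\sum (a_{0}'a')m\otimes(a_{0}''a''\otimes_B n)$ after relabelling the iterated coproduct. Finally $\Phi\circ\Theta=\mathrm{id}$ and $\Theta\circ\Phi=\mathrm{id}$ follow from a short Sweedler computation that reduces each composite to the antipode identity in the form $\sum S(a'')a'\otimes a'''=1\otimes a$ (equivalently $\sum S(a'')a'=\varepsilon(a)1$); the ``reversed'' identity actually needed is available because the Hopf algebras in play—the Steenrod algebra $A$ and the exterior algebra $E=\Lambda(Q_{0},Q_{1})$—are cocommutative, their duals being polynomial algebras, so $S^{2}=\mathrm{id}$. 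Being the two-sided inverse of the $A$-linear map $\Theta$, the map $\Phi$ is automatically $A$-linear, and the isomorphism follows.

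The only real work is bookkeeping: verifying well-definedness on the two relative tensor products, where one slides $\psi$ and $S$ past elements of $B$ (legitimate precisely because $B$ is a sub-Hopf-algebra), and carrying out the Sweedler cancellations in $\Phi\circ\Theta$ and $\Theta\circ\Phi$ without losing track of which copy of the iterated coproduct is being contracted. None of this is deep; the substance of the proposition lies entirely in choosing the maps $\Theta$ and $\Phi$ correctly.
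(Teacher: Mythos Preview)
Your proof is correct and is the standard argument. The paper itself does not prove this proposition; it is quoted from Liulevicius \cite{Liu68}, and only the explicit isomorphism in the special case $N=\mathbb{Z}/2$ is recorded in the subsequent Remark, namely $\theta(a\otimes m)=\sum a'\otimes1\otimes a''m$ with inverse $\theta^{-1}(a\otimes1\otimes m)=\sum a'\otimes\chi(a'')m$, the reader being referred to \cite{Liu68} for the verification. Your maps $\Theta,\Phi$ agree with these up to the cocommutativity swap $a'\leftrightarrow a''$, which you correctly invoke anyway for the antipode cancellation.

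One small point worth tightening: your $\Phi$ already needs cocommutativity for \emph{well-definedness} over $\otimes_B$, not only for the composites to collapse. Checking $\Phi(m\otimes(ab\otimes_B n))=\Phi(m\otimes(a\otimes_B bn))$ for $b\in B$ produces, after moving $b''$ across $\otimes_B$ and expanding the diagonal $B$-action, a factor $\sum b''S(b')$ rather than the antipode-axiom form $\sum S(b')b''$; these coincide only under (co)commutativity. So the hypothesis you flag at the end should be invoked one step earlier. For the intended application ($A$ the mod~$2$ Steenrod algebra, $B=E$, both cocommutative) this is of course harmless.
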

So one can easily see that the Steenrod algebra $A$ is a Hopf algebra, and $E$ is a Hopf subalgebra of $A$, the cohomology rings are the left $A$-modules. Now $M,N$ are left $A$-modules with $A$-actions $\mu_{M},\mu_{N}$, $M\otimes{N}$ is also a left $A$-module with $A$-action defined by the following map:
\[
   A\otimes{M}\otimes{N}\rightarrow{A}\otimes{A}\otimes{M}\otimes{N}\rightarrow{A}\otimes{M}\otimes{A}\otimes{N}\rightarrow{M}\otimes{N},
\]
the first is the diagonal map tensored with the identity maps, the second map is the twist map $a\otimes{m}\rightarrow{m}\otimes{a}$ tensored with identities. The notation described above ${}_D\!(M\otimes{N})$ is $M\otimes{N}$ with this twist action. While ${}_L\!(M\otimes{N})$ means the extended $A$-action over $M$ ($A$ only acts on $M$). Also if $A$ is a left $A$-module and a right $B$-module at the same time, then $A\otimes_{B}N$ is a left $A$-module with the extended action over $A$, $N$ a left $B$-module.
\begin{remark}
According to the proposition, if we set $N=\mathbb{Z}/2$ and $B=E$, the isomorphism becomes $\theta:{}_L\![A\otimes_{E}M]\cong{}_D\![(A\otimes_{E}{\mathbb{Z}/2})\otimes{M})]$, and is given by $\theta(a\otimes{m})=\sum{a'\otimes1\otimes{a''m}}$, with inverse $\theta^{-1}(a\otimes1\otimes{m})=\sum{a'\otimes\chi(a'')m}$, where $\psi(a)=\sum{a'\otimes{a''}}$, $\chi$ is the conjugation map. The reader can check the details in \cite{Liu68}.
\end{remark}
Now ${H}^\ast(\mathit{bu}\wedge{X})\cong{{H}^\ast(\mathit{bu})}\otimes{\tilde{H}^\ast(X)}\cong{(A\otimes_{E}{\mathbb{Z}/2})\otimes{\tilde{H}^\ast(X)}}\overset{ \theta^{-1}}\cong{A\otimes_{E}\tilde{H}^\ast(X)}$ for any space $X$, so we replace it with $BSO(2n)$. Using Lemma 2 and Proposition 2 we get 
\[
   {H}^\ast(\mathit{bu}\wedge{BSO(2n)})\cong\newline{A\otimes_{E}\tilde{H}^\ast(BSO(2n))}\cong
\]
\[
   {A\otimes_{E}(\tilde{H}^\ast(BSO(2n-1))\oplus{\tilde{H}^\ast(MSO_{2n})})}\cong
\]
\[
   (A\otimes_{E}D_{2n-1})\oplus(A\otimes_{E}M_{2n-1})\oplus(A\otimes_{E}\tilde{H}^\ast(MSO_{2n})). 
\]
This is where we get to verify our construction of the maps. To prove $\mathit{bu}\wedge{BSO(2n)}$ is equivalent to the wedge sum of spectra described in the theorem, we construct a mapping $\Psi:\mathit{bu}\wedge{BSO(2n)}\rightarrow[{\bigvee_{\alpha}}\Sigma^{\alpha}H\mathbb{Z}/2]\vee[{\bigvee_{\beta}}\Sigma^{\beta}\mathit{bu}]\vee[\mathit{bu}\wedge{MSO_{2n}}]$ such that it induces a mod 2 cohomology isomorphism, hence a homotopy equivalence. This map should be a wedge sum of maps that represent the monomial generators of the $E$-module structure of $\tilde{H}^\ast(BSO(2n))$. Due to the results of Section 3, we can see that there are 3 types of monomial generators which needs a suitable topological realisation, that is the representing spectra whose cohomology exactly are the $E$-modules generated by them. We show the construction of the maps along with the proof of the homotopy equivalence of $\Psi$.\newline
\textit{Proof} \textit{of} \textit{Theorem} \textit{1}. For the module $D_{2n-1}\subset\tilde{H}^\ast(BSO(2n-1))\subset\tilde{H}^\ast(BSO(2n))$, we take the map $\phi\circ{h_{2n}}=\psi_{\beta}:BSO(2n)\rightarrow{\Sigma^{\beta}\mathit{bu}}$ that takes $1\in\tilde{H}^\ast(\Sigma^{\beta}\mathit{bu})$ to each trivial generator $d_{J}$. This map leads to 
\[
   \Psi_{\beta}:\mathit{bu}\wedge{BSO(2n)}\xrightarrow{1\wedge{h_{2n}}}\mathit{bu}\wedge{BO(2n)}\xrightarrow{1\wedge{\phi}}\mathit{bu}\wedge\Sigma^{\beta}\mathit{bu}\rightarrow\Sigma^{\beta}\mathit{bu}. 
\]
The first map $1\wedge{h_{2n}}$ is the smash product of $h_{2n}$ and the identity map of $\mathit{bu}$, $h_{2n}$ is the 2-fold map, the details is described in \cite{MiSt74}. The second map $1\wedge{\phi}$ is the smash product of the identity of $\mathit{bu}$ with $\phi:BO(2n)\rightarrow\Sigma^{\beta}\mathit{bu}$, which takes 1 to a trivial generator, is described in \cite{WiYa12}. The last map is just the multiplication of the ring spectrum $\mathit{bu}$. To check the mapping of generators in cohomology, $1\in\tilde{H}^\ast(\Sigma^{\beta}\mathit{bu})$ to $1\otimes\Sigma^{\beta}1$ then to $1\otimes{d}$, where $d$ is the $E$-trivial generator in $\tilde{H}^\ast(BO(2n))$, and finally through the 2-fold map it sends to $1\otimes{d_{J}}$, where $d_{J}$ is the trivial generator in $\tilde{H}^\ast(BSO(2n-1))$, so $\beta$ is the degree of $d_{J}$. Now from the $A$-module summand related to $d_{J}$ in ${H}^\ast(\mathit{bu}\wedge{BSO(2n)})$ is just $A\otimes_{E}\Sigma^{\beta}\mathbb{Z}/2\cong\tilde{H}^\ast(\Sigma^{\beta}\mathit{bu})$. This cyclic isomorphism map from generator to generator, hence all the trivial generators in $D_{2n-1}$ are represented by the maps.\newline For the free module $M_{2n-1}$, the free $E$-generators $t_{j}$ corresponds to the map $\psi_{\alpha}:BSO(2n)\rightarrow\Sigma^{\alpha}H\mathbb{Z}/2$, whereas in the splitting it becomes:
\[
   \Psi_{\alpha}:\mathit{bu}\wedge{BSO(2n)}\xrightarrow{1\wedge\psi_{\alpha}}\mathit{bu}\wedge\Sigma^{\alpha}H\mathbb{Z}/2\xrightarrow{\nu}\Sigma^{\alpha}H\mathbb{Z}/2. 
\]
The first map is just the smash product map $1\wedge\psi_{\alpha}$, the second map $\nu:\mathit{bu}\wedge{H\mathbb{Z}/2}\rightarrow{H\mathbb{Z}/2}$ is the module spectrum mapping of $H\mathbb{Z}/2$. Inspecting the cohomology mappings we find that the generator of the free $A$-module goes from $1\otimes1$ to $1\otimes{t_{i}}\in{H}^\ast(\mathit{bu}\wedge{BSO(2n-1)})\subset{H}^\ast(\mathit{bu}\wedge{BSO(2n)})$. So we can see the entire map is just actually $A\rightarrow{A\otimes_{E}E}\cong{A}$ which takes free generator to free generator. The isomorphism of the free part is done. Now for the map $\Omega:\mathit{bu}\wedge{BSO(2n)}\rightarrow\mathit{bu}\wedge{MSO_{2n}}$, it is just the smash product of the cofiber map $C_{Bi}:BSO(2n)\rightarrow{MSO_{2n}}$ with the spectrum $\mathit{bu}$. In the cohomology of this map, the generators are mapped $1\otimes{z\mu_{2n}}\rightarrow1\otimes{z\widehat{\omega_{2n}}}\cong{z\widehat{\omega_{2n}}}$, where $z\mu_{2n}$ is an $E$-generator in $\tilde{H}^\ast(MSO_{2n})$ which maps to a generator $z\widehat{\omega_{2n}}\in{H}^\ast(BSO(2n))\widehat{\omega_{2n}}$. So this is a cohomology map from ${H}^\ast(\mathit{bu})\otimes{H}^\ast(MSO_{2n})$ to ${H}^\ast(\mathit{bu})\otimes{H}^\ast(BSO(2n))$ such that the isomorphic image is  ${H}^\ast(\mathit{bu})$ tensored with the $E$-submodule ${H}^\ast(BSO(2n))\widehat{\omega_{2n}}$. So by taking the wedge sum of all the maps constructed above, we have $\Psi=[\bigvee_{\alpha}\Psi_{\alpha}]\vee[\bigvee_{\beta}\Psi_{\beta}]\vee\Omega$, the resulting map induces a cohomology isomorphism which maps $E$-generators to $E$-generators. This gives us 
\[
   H^{\ast}([{\bigvee_{\alpha}}\Sigma^{\alpha}H\mathbb{Z}/2]\vee[{\bigvee_{\beta}}\Sigma^{\beta}\mathit{bu}]\vee[\mathit{bu}\wedge{MSO_{2n}}])\xrightarrow{H^{\ast}(\Psi)}
\]
\[
   H^{\ast}(\mathit{bu}\wedge{BSO(2n)})\cong(A\otimes_{E}D_{2n-1})\oplus(A\otimes_{E}M_{2n-1})\oplus(A\otimes_{E}\tilde{H}^\ast(MSO_{2n}))
\]
\newline So $\Psi$ is a homotopy equivalence. This completes the proof.



\end{document}